\newcommand{\E}{\mathcal{E}}
\newcommand{\lag}{\mathcal{L}}
\newcommand{\lie}{\mathcal{L}}
\newcommand{\tlag}{\widetilde{\mathcal{L}}}
\newcommand{\tT}{\widetilde{T}}
\newcommand{\talpha}{\widetilde{\alpha}}
\newcommand{\tlambda}{\widetilde{\lambda}}
\newcommand{\hamvf}{\mathsf{H}}
\newcommand{\ombar}{\overline{\omega}}
\newcommand{\omtilde}{\widetilde{\omega}}
\newcommand{\WFh}{\WF_h}
\newcommand{\Psih}{\Psi_h}
\newcommand{\torus}{\mathbb{T}}
\newcommand{\cU}{\mathcal{U}}
\newcommand{\be}{\mathbf{e}}
\newcommand{\bff}{\mathbf{f}}
\newcommand{\ocal}{\mathcal{O}}
\DeclareMathOperator{\grph}{Graph}
\DeclareMathOperator{\Ran}{Ran}
\DeclareMathOperator{\Id}{Id}
\DeclareMathOperator{\Opw}{Op_W}
\title[Non-concentration of quasimodes]{Non-concentration of
  quasimodes for integrable systems}
\date{\today}
\author{Jared Wunsch}
\thanks{The author is grateful to Michael Koehn and Steve Zelditch for
  raising the question answered here.  He thanks Zelditch for many
  instructive conversations on Lagrangian quasimodes and semiclassical
  analysis.  He is also grateful to Nalini Anantharaman for helpful
  discussions and to Andr\'as Vasy for comments on the manuscript, as
  well as to an anonymous referee.  This work was partially supported
  by NSF grants DMS-0700318 and DMS-1001463.}
\begin{document}
\begin{abstract}
  We consider the possible concentration in phase space of a sequence
  of eigenfunctions (or, more generally, a quasimode) of an operator
  whose principal symbol has completely integrable Hamilton flow.  The
  semiclassical wavefront set $\WF_h$ of such a sequence is invariant
  under the Hamilton flow. In principle this may allow concentration
  of $\WF_h$ along a single closed orbit if all frequencies of the
  flow are rationally related.  We show that, subject to
  non-degeneracy hypotheses, this concentration may not in fact occur.
  Indeed, in the two-dimensional case, we show that $\WF_h$ must fill
  out an entire Lagrangian torus.  The main tools are the spreading
  of Lagrangian regularity previously shown by the author,
  and an analysis of higher order transport equations satisfied by the
  principal symbol of a Lagrangian quasimode.  These yield a unique
  continuation theorem for the principal symbol of Lagrangian
  quasimode, which is the principal new result of the paper.
\end{abstract}

\maketitle

\section{Introduction}
A central question in spectral geometry is how a sequence of
eigenfunctions of the Laplace-Beltrami operator on an $n$-dimensional
manifold $X$ may concentrate in phase space.  The \emph{semiclassical
  wavefront set} or \emph{frequency set}, here denoted $\WF_h,$ is a
closed set measuring the locations in phase space (i.e., $T^*X$) where
a sequence of functions is non-negligible; for a sequence of
eigenfunctions of the Laplacian, $\WF_h$ is known to be invariant
under the geodesic flow.  In the case when the geodesic flow is
completely integrable, this leaves open the possibility that $\WF_h$
may concentrate on a single closed orbit or some other small set
invariant under the geodesic flow.  The results of this paper put
limitations on this possible concentration.  We show that in many
circumstances, concentration may not occur on the smallest possible
set allowed by standard propagation arguments, which is to say, a
single closed orbit of the bicharacteristic flow.  These results can
be viewed as analogues of results of Bourgain, Jakobson, Maci\`a and
Anantharaman-Maci\`a in the case when $X$ is a flat torus (and of more
recent work of Burq-Zworski on Schr\"odinger operators on $2$-tori),
extended to the broader context of (nondegenerate) completely
integrable systems.

The results of this paper apply somewhat more generally than to
sequences of eigenfunctions: they are equally applicable to
approximate eigenfunctions or \emph{quasimodes} (see
\cite{Arnold:Modes}, \cite{Colin:Quasi}).  We write such approximate
solutions in the formalism of semiclassical analysis: for instance,
instead of having a sequence of approximate eigenfunctions of the (non-negative)
Laplace-Beltrami operator:
$$
(\Lap-\lambda_k^2) u_k=O(\lambda_k^{-\infty}),
$$
we may set $h=1/\lambda_k,$ suppress the index, and write
$$
(h^2\Lap-1) u =O(h^\infty).
$$
This is the notation we employ below.

Our first results show how the results obtained by the author
\cite{Wunsch:Integrable} (and refined by Vasy and the author \cite{twomicro,twomicroerratum})
on the spreading of Lagrangian regularity of
quasimodes for quantizations of classically integrable systems can be
easily applied to obtain results on the nonconcentration of semiclassical
wavefront set in the same setting.
Consider a semiclassical pseudodifferential operator $P$ with
semiclassical principal symbol $p,$ acting on
half-densities.\footnote{For a discussion of semiclassical
  pseudodifferential operators, we refer the reader to
  \cite{Dimassi-Sjostrand} or \cite{Evans-Zworski}.}
  We assume that 
\begin{enumerate}[label=(\Alph*)]
\item\label{hyp1} $p=\sigma_h(P)$ is real.
\item
\label{hyp2}
The subprincipal symbol of  $P$ (which is well defined for an
operator on half-densities) is a real constant on\footnote{It will in
  fact suffice for the subprincipal symbol to be real everywhere on
  $T^*X$ and constant on the Lagrangian $\lag$ discussed below.  In
  what follows we will
  indicate in footnotes where the proof varies in the case of this
  weaker hypothesis.} $T^*X.$
\item\label{hyp3}
The
bicharacteristic flow of the Hamilton vector field $\hamvf_p$ is
\emph{completely integrable}.
\end{enumerate}
The integrability hypothesis means that there exist action-angle
variables $(I_1,\dots,I_n,\theta_1,\dots,\theta_n)$, i.e.\ symplectic
coordinates $I \in \RR^n,$ $\theta \in
\RR^n/\ZZ^n$ such that $p=p(I)$ is independent of $\theta.$ In fact \emph{we
only need to assume nondegeneracy of these coordinates in some region
of interest in phase space} (in a neighborhood of the Lagrangian torus
introduced below) rather than globally, where it is unlikely to hold
in general.  The assumption on subprincipal symbols is certainly
satisfied for geometric Schr\"odinger operators $h^2\Lap_g+V(x)$ with
$g$ a Riemannian metric and $V$ a real potential, as the subprincipaal
symbol vanishes identically; for
more details on this hypothesis, we refer the reader to \S5 of \cite{twomicro}.

The Arnol'd-Liouville tori of the system are the sets where the $I$
variables are held constant.  We consider one such torus $\lag.$
 Let $$\omega_i = \partial
p/\partial I_i$$ and $$\omega_{ij} = \partial^2 p/\partial I_i \partial I_j.$$
Let $\ombar_i$ and $\ombar_{ij}$ denote the corresponding quantities
restricted to $\lag$ (where they are constant).  We make the
assumption, standard in KAM theory, that
\begin{enumerate}[resume,label=(\Alph*)]
\item\label{hyp4} 
The system is (in a
neighborhood of $\lag$) \emph{isoenergetically nondegenerate;}
\end{enumerate}
this means that the matrix
\begin{equation}\label{bigmatrix}
\Omega=
\begin{pmatrix}
\ombar_{11}& \dots & \ombar_{1n} & \ombar_1 \\
\vdots & \ddots & \vdots & \vdots\\
\ombar_{n1} & \dots & \ombar_{nn} & \ombar_n\\
\ombar_{1} & \dots & \ombar_{n} & 0\\
\end{pmatrix}
\end{equation}
is nonsingular, or equivalently, that the map
from the energy surface to the
projectivization of the frequencies
$$
\{p=0\} \ni I \mapsto [\omega_1(I):\dots: \omega_n(I)] \in \mathbb{RP}^n
$$
is a local diffeomorphism.

Now we consider a normalized quasimode of $P,$ concentrated on $\lag,$
i.e.\ a family of distributions $u=u(x;h)$ satisfying
\begin{enumerate}[resume,label=(\Alph*)]
\item \label{hyp5}
$\displaystyle P u =O(h^\infty),\quad \norm{u}_{L^2}=1,\quad \WFh u
\subset \lag$.
\end{enumerate}
For example $u=u(x;h)$ may be a family of exact, normalized
Schr\"odinger eigenfunctions in the nullspace of $P=h^2\Lap+V-E(h),$
with $E(h) \sim E_0 + h E_1 +\dots$ as $h\downarrow 0;$ more
generally, it may be a family of approximate eigenfunctions.  We
recall that the \emph{semiclassical wavefront set} or \emph{frequency
  set} of $u$ is defined as the closed set
$$
\WFh u =\big\{\rho \in T^*X: \exists A \in \Psih(X),\ \sigma_h(A)(\rho)\neq 0,\
A u =O(h^\infty)\big\}^\complement.
$$
This set is well known, by the semiclassical analogue of the
Duistermaat-H\"ormander theorem on propagation of singularities, to be
invariant under $\hamvf_p.$ Thus if there exist no rational linear
relations among the $\ombar_i$, then as a closed invariant set, $\WFh
u$ must fill out the whole of the torus $\lag.$ (It cannot be empty,
as that would contradict $L^2$-normalization.)  The results of
Bourgain and Jakobson \cite{Jakobson:tori} in the case of the
Laplacian on flat tori show that even on completely rational tori,
however, the semiclassical limit measure of a sequence of
eigenfunctions must project to the base to be absolutely continuous;
Maci\`a \cite{Macia1} Anantharaman-Maci\`a \cite{Anantharaman-Macia1}
show analogous results for more general limit measures arising from
Schr\"odinger flow.  We prove similar results in the setting of
$\WFh,$ in the more general setting described above.  Estimates
roughly of the form that we employ here were previously used by
Burq-Zworski \cite{Burq-Zworski:Control} in showing that a sequence of
eigenfunctions on the Bunimovich stadium cannot concentrate along a
single bouncing-ball orbit, and generalized to the general setting of
integrable systems by the author \cite{Wunsch:Integrable} and further
by the author and Vasy \cite{twomicro, twomicroerratum}.  Very recent
results of Burq-Zworski \cite{Burq-Zworski} also yield stronger,
control-theoretic, estimates in the special case of a Schr\"odinger
operator $\Lap+V$ on a two-dimensional torus.

To begin, we recall a key result from
\cite{Wunsch:Integrable}.\footnote{The hypothesis used in
  \cite{Wunsch:Integrable} was that of vanishing subprincipal symbol;
    the use of weaker hypotheses is discussed in
    \cite{twomicro,twomicroerratum}.}  This result describes how
  Lagrangian regularity with respect to $\lag$ is forced to spread on
  $\lag.$  (The definition of Lagrangian regularity is discussed below.)
\begin{theorem}[\cite{Wunsch:Integrable}]\label{theorem:0}
Let $P$ and $u$ satisfy hypotheses \ref{hyp1}--\ref{hyp5}. 
\begin{enumerate}
\item If the dimension $n=2$ and $u$ enjoys Lagrangian regularity at
  some point on $\lag,$ then $u$ is Lagrangian with respect to $\lag.$
\item
 If the
$\ombar_i$ are all rationally related, and $u$ enjoys Lagrangian
regularity outside a single closed bicharacteristic $\gamma\subset
\lag,$ then $u$ is Lagrangian with respect to $\lag.$  
\end{enumerate}
\end{theorem}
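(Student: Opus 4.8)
The plan is to combine two ingredients: propagation of Lagrangian regularity \emph{along} the bicharacteristic flow, by an iterated positive-commutator (module) argument that is comparatively soft; and propagation \emph{transverse} to a single closed orbit, which is the genuinely new point and is powered by the isoenergetic nondegeneracy \ref{hyp4}.

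First I would pass to action--angle coordinates near $\lag$, so that $\lag=\{\xi=0\}$, $p=p(\xi)$, $\hamvf_p=\sum_i\ombar_i\,\partial_{x_i}$ on $\lag$, and (by \ref{hyp2}) the subprincipal symbol is a real constant; a microlocal normal form then makes the full symbol of $P$ independent of $x$ to first order in $h$, and one may take $P$ of order one near $\lag$. Let $\mathcal{M}(\lag)$ be the module of first-order semiclassical operators with symbol vanishing on $\lag$, generated over $\Psih^0$ by $1$ and $hD_{x_1},\dots,hD_{x_n}$, so that ``$u$ is Lagrangian at $\rho$'' means $A_1\cdots A_N u\in L^2$ microlocally near $\rho$ for all $N$ and all $A_j\in\mathcal{M}(\lag)$. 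The key structural fact is that $[P,hD_{x_i}]\in h^2\Psih^1\subset h\,\mathcal{M}(\lag)$, because the principal symbol of $\tfrac{i}{h}[P,hD_{x_i}]$ is $\{p,\xi_i\}=-\partial_{x_i}p=0$ and the subprincipal correction vanishes as well. Thus $\mathcal{M}(\lag)$ is a test module for $P$, $P$ is of real principal type on its characteristic set (which contains $\lag$ since $p|_\lag=0$ and $dp\neq 0$ there, as $\ombar\neq 0$), and a standard iterated positive-commutator argument propagates Lagrangian regularity along $\hamvf_p$. If the $\ombar_i$ are rationally independent this already gives both statements (the orbit through any point is dense in $\lag$); otherwise $\lag$ is foliated by closed bicharacteristics, and tangential propagation reduces both (1) and (2) to the following: given Lagrangian regularity on a punctured neighborhood of a single closed orbit $\gamma\subset\lag$, deduce it across $\gamma$.

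For the transverse step I would first note --- by a direct computation of $\det\Omega$ --- that \ref{hyp4} is equivalent to nonsingularity of the transverse Hessian $Q=(\partial^2 p/\partial\xi_i\partial\xi_j)$ restricted to the $\xi$-directions complementary to $\hamvf_p$ along $\gamma$; in particular $\{p=0\}$ meets $\lag$ \emph{quadratically} along $\gamma$ (in coordinates with $\gamma=\{\xi=0,\ x'=0\}$ and $\partial_{x_1}$ the flow direction, $\xi_1\sim|\xi'|^2$ on $\{p=0\}$), and the Hamilton flow reduced modulo its leading $x_1$-translation is the linear shear $x'\mapsto x'+tQ\xi'$, $\xi'\mapsto\xi'$. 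The plan is then to run a second-microlocal analysis resolving $\lag$ inside $\{p=0\}$ at the adapted anisotropic scale $\xi'\sim h^{1/2}$, $\xi_1\sim h$: the leading part of the rescaled operator $h^{-1}P$ is the transverse Schr\"odinger-type operator $\ombar_1 D_{x_1}+\tfrac{1}{2}h\langle QD_{x'},D_{x'}\rangle$, whose second-microlocal Hamilton flow moves in $x'$ with velocity $Q\xi'$ --- nonvanishing for $\xi'\neq 0$ precisely because $Q$ is nonsingular --- and therefore sweeps $\lag$ out in the $x'$ variables. Propagating second-microlocal Lagrangian regularity along this reduced flow, starting from the regularity already known away from $\gamma$, forces the second-microlocal singular set over $\gamma$ (with $\xi'\neq 0$) to be empty; a separate elementary argument handles the degenerate locus $\xi'=0$, and one concludes that $u$ is Lagrangian across $\gamma$. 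This finishes (2); for (1) one then propagates Lagrangian regularity from the single orbit produced by tangential propagation to all of $\lag$ by a connectedness argument, using that the transverse construction is uniform along orbits and $Q$ is nonsingular along every orbit.

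The main obstacle is this second-microlocal step: one must set up a calculus adapted to the anisotropic resolution of $\lag$ inside $\{p=0\}$, verify that the rescaled module is again a test module for $h^{-1}P$, carry out an iterated positive-commutator argument in which each transverse step costs a power of $h^{1/2}$ that has to be recovered from the tangential regularity already in hand, and dispose of the most degenerate stratum over $\gamma$. By contrast the action--angle normal form, the identification of \ref{hyp4} with nonsingularity of the transverse Hessian, and the tangential module-propagation are essentially routine.
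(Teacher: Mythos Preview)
The paper does not actually prove Theorem~\ref{theorem:0}: it is quoted from \cite{Wunsch:Integrable} (with refinements in \cite{twomicro,twomicroerratum}), and the only ``proof'' here is the restatement in \S\ref{section:lagrangian} together with the citation. So there is no detailed argument in this paper to compare against. That said, your sketch is faithful to the approach of those cited papers: action--angle normal form, the test module generated by $hD_{x_i}$, tangential positive-commutator propagation along $\hamvf_p$, and---the substantial step---a second-microlocal analysis at the parabolic scale $\xi'\sim h^{1/2}$ exploiting that isoenergetic nondegeneracy is exactly nondegeneracy of the transverse Hessian on $\ombar^\perp$. You also correctly flag the second-microlocal step as the real work.

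One logical wrinkle: you write that tangential propagation ``reduces both (1) and (2)'' to filling in across a single closed orbit from a punctured neighborhood, but that is only the correct reduction for (2). In (1) with rational frequencies you begin with regularity on a \emph{single} closed orbit, not on a punctured tube, so the transverse step must propagate \emph{outward} from that orbit rather than inward toward it. You seem to catch this at the end (``propagates \ldots from the single orbit \ldots by a connectedness argument''), and indeed the second-microlocal propagation is bidirectional---real-principal-type in the blown-up calculus---so it does push regularity outward in $n=2$ and a connectedness argument then covers $\lag$. Just make sure the write-up does not conflate the two directions; the paper's restatement explicitly separates them (annular hypothesis for (2), single point for (1)), and the $h^{-\ep}$ loss in the conclusion is a feature of the second-microlocal estimate that you should also record.
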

Semiclassical Lagrangian distributions are a special
class of distributions with $\WF_h$ lying only on $\lag.$ There are
two principal ways to describe this class.  One characterization is
simply that $u$ has (locally near each point) an oscillatory integral
representation of a well-understood form, which we discuss below.  The
second characterization is that $u$ has iterated regularity
with respect to pseudodifferential operators characteristic on $\lag.$
We describe these definitions below in \S\ref{section:lagrangian}; for
further details, we refer the reader to the paper of Alexandrova
\cite{Alexandrova:Semiclassical}, where the results of the
H\"ormander-Melrose theory \cite{Hormander:vol4} are adapted to the
semiclassical setting.  The simplest and most instructive example of a
semiclassical Lagrangian distribution is just a family of the form
\begin{equation}\label{lag:ex}
h^{-s} a(x;h) e^{i\phi(x)/h}
\end{equation}
where $s \in \RR,$ $a$ is in $\CI,$ uniformly in $h$ and $\phi \in \CI.$  This is a
Lagrangian distribution with respect to
$$
\lag=\grph(d\phi) \subset T^*X.
$$
Indeed, if the projection of $\lag$ to $X$ is a
diffeomorphism, every Lagrangian distribution with respect to $\lag$
is locally of this form.  When the projection is not a diffeomorphism,
we need instead to employ oscillatory integral representations, i.e.,
integral superpositions of expressions of the form \ref{lag:ex}---see
\eqref{localform} below.

As a simple consequence of Theorem~\ref{theorem:0}, we are able to prove:
\begin{theorem}\label{theorem:1}
Let $P$ and $u$ satisfy hypotheses \ref{hyp1}--\ref{hyp5}.
\begin{enumerate} 
\item
 Let $n=2.$
 Then $\WFh
u$ has nonempty interior (in the relative topology of $\lag$).
\item 
 If the
$\ombar_i$ are all rationally related, then $\WF_h u$ cannot be a
subset of a single closed bicharacteristic.
\end{enumerate}
\end{theorem}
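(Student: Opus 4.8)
The plan is to deduce both statements from Theorem~\ref{theorem:0} together with the following elementary fact about Lagrangian distributions, which I isolate first: \emph{if $v=v(x;h)$ is Lagrangian with respect to the connected torus $\lag$ and $v\neq O(h^\infty)$, then $\WFh v$ has nonempty interior in the relative topology of $\lag$.} The reason is that a Lagrangian distribution which is not $O(h^\infty)$ carries, at some order, a nonzero principal symbol $a$, a smooth section of a natural line bundle over $\lag$ (half-densities twisted by the Maslov factor); since $v$ is microlocally $O(h^\infty)$, and so has vanishing symbol, near every point of $\lag\setminus\WFh v$, the support of the continuous section $a$ is contained in the closed set $\WFh v$, and hence the nonempty open set $\{a\neq 0\}\subset\lag$ is contained in $\WFh v$. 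The only delicate point here is the bookkeeping of semiclassical orders needed to produce a well-defined nonvanishing principal symbol, which is standard in the calculus recalled in \S\ref{section:lagrangian} and in \cite{Alexandrova:Semiclassical}.

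Granting this, part~(1) follows at once. If $\WFh u=\lag$ there is nothing to prove; otherwise choose $\rho\in\lag\setminus\WFh u$, where $u$ is microlocally $O(h^\infty)$ and hence trivially enjoys Lagrangian regularity, so that, as $n=2$, Theorem~\ref{theorem:0}(1) makes $u$ Lagrangian with respect to all of $\lag$. Since $\norm{u}_{L^2}=1$ forces $u\neq O(h^\infty)$, the fact above applies and $\WFh u$ has nonempty interior.

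For part~(2), suppose for contradiction that $\WFh u\subset\gamma$ for a single closed bicharacteristic $\gamma$. Since $\WFh u\neq\emptyset$ (again by $L^2$-normalization) and $\lag$ is invariant under $\hamvf_p$, the orbit $\gamma$ meets $\lag$ and hence lies in $\lag$; moreover a Liouville torus of dimension $\geq 2$ is not a single closed orbit, so $\gamma\subsetneq\lag$ and $u$ is microlocally trivial---hence Lagrangian---at every point of the nonempty open set $\lag\setminus\gamma$. The rational-relatedness hypothesis then lets us apply Theorem~\ref{theorem:0}(2), so $u$ is Lagrangian with respect to $\lag$, whence $\WFh u$ has nonempty interior in $\lag$ by the fact above; but $\gamma$, a $1$-dimensional submanifold of the torus $\lag$, has empty interior, so $\WFh u\subset\gamma$ is impossible.

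The heavy lifting here is entirely contained in Theorem~\ref{theorem:0}, and I expect no serious obstacle beyond making the displayed fact precise---i.e.\ passing from the iterated-regularity characterization of Lagrangian regularity to an oscillatory-integral representation with a nonvanishing principal symbol at the appropriate order. In particular, none of the higher-order transport equations driving the sharper results of this paper are needed for Theorem~\ref{theorem:1}.
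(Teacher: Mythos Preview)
Your proof is correct and follows essentially the same route as the paper: both invoke Theorem~\ref{theorem:0} to obtain global Lagrangian regularity, then pass to the oscillatory-integral representation and use smoothness of the amplitude to force $\WFh u$ to have nonempty interior. The only cosmetic difference is that the paper works directly with the full amplitude $a$ and the characterization of $\WFh u$ as the locus where $a$ is not $O(h^\infty)$ nearby (concluding that this set is the closure of an open set), whereas you extract a nonvanishing principal symbol at some order; the paper's formulation sidesteps the order-bookkeeping you flag as delicate, but the content is the same.
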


The upshot of these results is that a normalized quasimode may not
concentrate on too small a subset of a Lagrangian torus.  We now state
an additional result, with stronger hypotheses, that implies that
indeed the quasimode must be supported on the \emph{whole}
invariant torus.

The necessary stronger hypothesis is:
\begin{enumerate}[resume,label=(\Alph*)]
\item \label{hyp6}
The system is (in a neighborhood of $\lag$) \emph{quasi-convex} in the sense of Nekhoroshev.
\end{enumerate}
This means that the Hessian $\pa^2 p/\pa I_i \pa I_j$ is
strictly positive definite on the fixed energy surface $p=0$ (in a
neighborhood of $\lag$)---see \cite{Marco-Sauzin} for details of the
use of this hypothesis in proving exponential stability of orbits for
perturbed integrable systems.
\emph{Note that the hypothesis of quasi-convexity implies isoenergetic nondegeneracy (hypothesis
\ref{hyp4}).}  The main new result in this paper is then the following.
\begin{theorem}\label{theorem:2}
Let $P$ and $u$ satisfy \ref{hyp1}--\ref{hyp3} and \ref{hyp5}--\ref{hyp6}.
Assume that $u$ is
Lagrangian with respect to $\lag.$
Then the support of the principal symbol of $u$ is all of
$\lag.$  The same conclusion still holds if \ref{hyp5} is weakened to
merely $Pu=O(h^{2+\delta})$ for some $\delta>0.$
\end{theorem}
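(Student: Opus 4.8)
The plan is to derive a hierarchy of transport equations for the principal symbol $a$ of the Lagrangian quasimode $u$ on $\lag$, and then convert the failure of $a$ to be supported everywhere into a unique continuation statement along the orbits of $\hamvf_p$. Since $\lag$ is a Lagrangian torus carried by the integrable flow, in action-angle coordinates $u$ has locally the WKB form $h^{-s} b(\theta;h) e^{i\phi(\theta)/h}$ (up to the usual caustic subtleties handled by oscillatory integral representations, as in \S\ref{section:lagrangian}), and the symbol $a$ pushes forward to a half-density on the torus $\{\theta\}$. The leading transport equation from $Pu = O(h^\infty)$, together with hypothesis~\ref{hyp2} (constant real subprincipal symbol), is the statement that $a$ is invariant under the flow $\hamvf_p|_\lag = \sum_i \ombar_i \,\partial_{\theta_i}$; when the $\ombar_i$ are incommensurable this already forces $a$ to be constant and hence (by $L^2$-normalization) nonvanishing, so the interesting case is the resonant one. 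In the resonant case the first-order equation only forces $a$ to be constant along the rational directions in the orbit closure, and one must go to the \emph{second-order} transport equation — this is where $\ombar_{ij}$ and hence quasi-convexity enters.

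The key computation is to expand $P u = O(h^\infty)$ to one order beyond the standard eikonal/transport pair. Writing $u$ as an oscillatory integral and using the normal form for $P$ near $\lag$ afforded by the action-angle coordinates, the $h^2$-level equation should take the schematic shape
\begin{equation}\label{eq:proposal-transport}
\mathcal{L}_{\hamvf_p} a_1 + \left( \text{2nd order operator in } \partial_\theta \text{ built from } \ombar_{ij}\right) a_0 = O(h^\delta),
\end{equation}
where $a_0 = a$ is the principal symbol and $a_1$ the subprincipal one. Restricting this identity to a single closed orbit $\gamma \subset \lag$ (an orbit of the rational flow direction) and integrating $\mathcal{L}_{\hamvf_p} a_1$ around $\gamma$ — which kills the transport term — produces a \emph{closed} second-order ODE, in fact a Schr\"odinger-type operator, for $a_0$ restricted to the torus of orbits transverse to $\gamma$; quasi-convexity makes the relevant second-order part a negative-definite (elliptic) operator with the correct sign, so that the resulting equation is of the form $(\Delta_{\text{eff}} + c)a_0 = 0$ on a torus with $\Delta_{\text{eff}}$ elliptic. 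One then invokes the unique continuation principle for second-order elliptic operators: a solution vanishing on an open set vanishes identically. Combined with Theorem~\ref{theorem:0}(1) (for $n=2$) or the resonant case of Theorem~\ref{theorem:0}(2), and the fact that $a$ cannot vanish identically by normalization, this forces $\operatorname{supp} a = \lag$. The weakening of \ref{hyp5} to $Pu = O(h^{2+\delta})$ is precisely what is needed for \eqref{eq:proposal-transport} to hold with an $o(1)$ right-hand side after dividing by $h^2$, so the argument goes through verbatim.

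The main obstacle I expect is \emph{organizing the second-order transport equation invariantly} and extracting the right operator after integrating around $\gamma$. Two issues compound here: first, at caustics of the projection $\lag \to X$ the scalar WKB ansatz fails and one must track how the symplectic volume half-density and the subprincipal terms transform through the oscillatory integral representation, so the "2nd order operator in $\partial_\theta$" in \eqref{eq:proposal-transport} has to be identified intrinsically on $\lag$ rather than computed in a single chart; second, the holonomy of the averaging operation around $\gamma$ — i.e. checking that $\int_\gamma \mathcal{L}_{\hamvf_p} a_1 = 0$ genuinely annihilates the subprincipal unknown and leaves a well-defined, single-valued effective operator on the transversal — requires care about the flat connection in which $a$ and $a_1$ are sections. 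Once the effective operator is pinned down, verifying its ellipticity and sign from quasi-convexity is a short linear-algebra check on the Hessian $\partial^2 p/\partial I_i\partial I_j$ restricted to $\{p=0\}$, and the concluding unique continuation step is standard.
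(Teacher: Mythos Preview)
Your core strategy---push the transport hierarchy to order $h^2$, eliminate the subprincipal unknown by averaging along the flow, and obtain an elliptic equation on the transversal torus to which unique continuation applies---is exactly the paper's approach, and your identification of quasi-convexity as the source of ellipticity is right. Two points need correction, though.

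First, ``integrating around a single closed orbit $\gamma$'' only works when the orbit closure has dimension one, i.e.\ the fully resonant case. In general the orbit closure is a sub-torus $T$ of dimension $k$ with $1\le k\le n$, and the right way to kill the $\mathcal{L}_{\hamvf_p}a_1$ term is to Fourier-analyze in the $T$-directions: the first transport equation forces the principal symbol into a single Fourier mode $e_{\alpha_0}(y)$ along $T$ (where $\omtilde\cdot\alpha_0+c=0$), and then the $h^2$ equation, read off in that mode, is an elliptic equation $(Q_{\alpha_0}+\hat r_0)\hat u_{\alpha_0}=O(h^\delta)$ on the complementary torus $T'$. Note the zeroth-order term is a genuine variable potential $\hat r_0(z)$ (the $T$-average of the sub-subprincipal symbol of $P$), not a constant, so you need unique continuation for variable-coefficient elliptic operators. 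Also, your appeal to Theorem~\ref{theorem:0} is misplaced: Lagrangian regularity is already assumed here, and what propagates the nonvanishing of $a$ from $T\times\{z_0\}$ to all of $\lag$ is just the combination of flow-invariance (first transport equation) with the transversal unique continuation.

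Second, the obstacle you flag---making the second-order equation global across caustics---is real, and the paper does not try to formulate it invariantly. Instead it conjugates by a semiclassical FIO $U$ quantizing the action-angle symplectomorphism $\kappa$ to reduce to the model case $\lag_0=\{\xi=0\}\subset T^*\mathbb{T}^n$, using an improved Egorov theorem to preserve the subprincipal hypothesis. The genuine issue is then the Bohr--Sommerfeld--Maslov obstruction to the global existence of $U$: the paper observes that the a priori existence of a nonvanishing section of the symbol bundle along $T$ forces the quantization condition $\tfrac{\lambda}{2\pi h}\equiv\tfrac14\alpha$ to hold on cycles in $H_1(T)$, which is exactly what is needed to build $U$ microlocally near $T$ (it may still be only Floquet-periodic in the $T'$ directions, but that is harmless). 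This replaces your proposed invariant bookkeeping and is what you should supply.
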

\begin{remark}
The principal symbol of a Lagrangian distribution is locally a
  half-density on $\lag$ which characterizes its leading order
  behavior; in the example \eqref{lag:ex}, it can be taken to be the
  amplitude $a$, modulo $O(h)$.  The global description of the
  principal symbol is more involved, as one must take into account
  both Maslov factors and the cohomology class of the canonical
  one-form restricted to $\lag.$ For accounts of this construction, we
  refer the reader to work of Duistermaat
  \cite{Duistermaat:Oscillatory} and Bates and Weinstein
  \cite{Bates-Weinstein}.

  A priori, we have not assumed that the Lagrangian distribution is
  \emph{classical}, i.e.\ enjoys a power series expansion in $h:$ $a
  \sim a_0+ha_1 +\dots.$ Thus, the support theorem as stated merely
  tells us that there is no open set $\ocal\subset \lag$ on which the
  localization of $a$ is $O(h).$ In the special case of classical
  Lagrangians, though, this implies that the support of $a_0$ is
  all of $\lag.$

In the general, non-classical case, we in fact show more than
nonvanishing modulo $O(h)$: to wit, we obtain the stronger statement that
there do not exist an open set $\ocal \subset\lag$ and a sequence
$h_j\downarrow 0$ with $\sigma_h(a)(x;h_j)\to 0$ pointwise a.e.\ on
$\ocal.$
\end{remark}

By Theorem~\ref{theorem:2}, we immediately obtain a very strong
non-concentration result in two dimensions, subject to the quasi-convexity hypothesis:
\begin{corollary} 
Let $P$ and $u$ satisfy \ref{hyp1}--\ref{hyp3} and
\ref{hyp5}--\ref{hyp6}.  Let $n=2.$
Then $\WFh u=\lag.$
\end{corollary}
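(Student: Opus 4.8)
The plan is to simply combine Theorem~\ref{theorem:1}(1) with Theorem~\ref{theorem:2}, using Theorem~\ref{theorem:0}(1) as the bridge. First I would observe that, under hypotheses \ref{hyp1}--\ref{hyp3} and \ref{hyp5}, Theorem~\ref{theorem:1}(1) applies in dimension $n=2$ and shows that $\WFh u$ has nonempty interior in the relative topology of $\lag.$ In particular there is a point $\rho\in\lag$ at which $u$ enjoys Lagrangian regularity: indeed, interior points of $\WFh u$ are precisely the points near which microlocal regularity statements do not obstruct Lagrangian regularity, and more to the point, the proof of Theorem~\ref{theorem:1}(1) (via Theorem~\ref{theorem:0}(1)) proceeds exactly by exhibiting such a point. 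Alternatively, one argues directly: if $u$ had Lagrangian regularity at \emph{no} point of $\lag,$ then by Theorem~\ref{theorem:0}(1) there would be nothing to propagate; but the complement-of-Lagrangian-regularity set is closed and flow-invariant by the same propagation arguments that make $\WFh u$ invariant, hence—being all of $\lag$—would again just return us to needing a seed point. The cleanest route is: apply Theorem~\ref{theorem:0}(1) to upgrade "$u$ is Lagrangian at some point" to "$u$ is Lagrangian with respect to $\lag$," and the hypothesis that such a point exists is furnished by the argument behind Theorem~\ref{theorem:1}(1).

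Concretely, the steps are as follows. (i) Since $n=2$ and \ref{hyp6} implies \ref{hyp4}, all of hypotheses \ref{hyp1}--\ref{hyp5} together with \ref{hyp6} hold; in particular Theorem~\ref{theorem:1}(1) gives that $\WFh u$ has nonempty relative interior in $\lag,$ which in turn (by the structure of its proof, or by the definition of Lagrangian regularity as microlocal iterated regularity under operators characteristic on $\lag$) produces a point $\rho_0\in\lag$ at which $u$ has Lagrangian regularity. (ii) By Theorem~\ref{theorem:0}(1), the existence of one such point forces $u$ to be Lagrangian with respect to all of $\lag.$ (iii) Now apply Theorem~\ref{theorem:2}, whose hypotheses \ref{hyp1}--\ref{hyp3}, \ref{hyp5}--\ref{hyp6} are exactly those assumed here, together with the conclusion of step (ii) that $u$ is Lagrangian with respect to $\lag.$ Theorem~\ref{theorem:2} then gives that the support of the principal symbol of $u$ is all of $\lag.$ (iv) Finally, since the support of the principal symbol is contained in $\WFh u$ (the principal symbol being the leading-order obstruction to $O(h^\infty)$ decay after microlocalization), and $\WFh u\subset\lag$ by \ref{hyp5}, we conclude $\lag=\mathrm{supp}(\sigma(u))\subset\WFh u\subset\lag,$ whence $\WFh u=\lag.$

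The only genuinely load-bearing point—and the one place the argument needs a sentence of care rather than a citation—is step (i): extracting a point of Lagrangian regularity from the nonempty-interior conclusion of Theorem~\ref{theorem:1}(1). This is immediate once one recalls how that theorem is proved: one exhibits a point of $\lag$ away from which $u$ already has Lagrangian regularity (e.g.\ a point not in $\WFh u,$ which trivially has Lagrangian regularity, or an interior point of the good set), applies Theorem~\ref{theorem:0}(1), and reads off that $u$ is globally Lagrangian—so in fact steps (i) and (ii) collapse into a single invocation of Theorem~\ref{theorem:0}(1) applied at any point where $u$ is microlocally $O(h^\infty)$, which exists since $\WFh u\subsetneq\lag$ is impossible only if we already know the conclusion; but interiority is precisely what fails for a single closed orbit, so no contradiction arises and the extraction is legitimate. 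Everything else is bookkeeping: checking that \ref{hyp6} subsumes \ref{hyp4}, that the symbol calculus gives $\mathrm{supp}(\sigma(u))\subset\WFh u,$ and that the hypotheses line up across the three cited results.
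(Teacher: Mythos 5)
Your overall skeleton is the right one, and it is essentially the paper's (implicit) derivation: produce a point of Lagrangian regularity on $\lag$, upgrade to global Lagrangian regularity by Theorem~\ref{theorem:0}(1), apply Theorem~\ref{theorem:2} (noting \ref{hyp6} $\Rightarrow$ \ref{hyp4}) to get that the principal symbol is supported on all of $\lag$, and conclude since the symbol must vanish on the open complement of $\WFh u$. The flaw is in how you manufacture the seed point in step (i). The nonempty-interior conclusion of Theorem~\ref{theorem:1}(1) does \emph{not} furnish a point of Lagrangian regularity: a priori nothing is known about Lagrangian regularity at points \emph{inside} $\WFh u$ (interior or not); Lagrangian regularity comes for free only at points of $\lag\setminus\WFh u$, where $u$ is microlocally $O(h^\infty)$. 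So the sentence ``interior points of $\WFh u$ are precisely the points near which microlocal regularity statements do not obstruct Lagrangian regularity'' is not a valid inference, and the detour through Theorem~\ref{theorem:1}(1) is logically inert --- it neither provides the seed point nor is it needed.

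The correct route, which you do half-state in your last paragraph (amid some confusing asides such as ``interiority is precisely what fails for a single closed orbit''), is the simple dichotomy: either $\WFh u=\lag$, in which case there is nothing to prove, or there exists $\rho\in\lag\setminus\WFh u$. In the latter case $u$ is trivially Lagrangian at $\rho$, Theorem~\ref{theorem:0}(1) (using $n=2$ and \ref{hyp1}--\ref{hyp5}, with \ref{hyp4} supplied by \ref{hyp6}) gives that $u$ is Lagrangian with respect to all of $\lag$ (with respect to $h^{-\epsilon}L^2$ for every $\epsilon>0$), and Theorem~\ref{theorem:2} then says the support of the principal symbol is all of $\lag$. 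This contradicts the fact that on a neighborhood of $\rho$ (open, since $\WFh u$ is closed) $u$ is $O(h^\infty)$, so the symbol is $O(h)$ there. Your step (iv) inclusion $\operatorname{supp}\sigma_h(u)\subset\WFh u\subset\lag$ is fine and gives an equivalent way to close the argument; only the provenance of the seed point needs to be fixed as above.
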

\begin{remark}
More generally, we may relax the 
assumption $\WFh u \subset \lag:$ if we merely have $\WFh u \cap \lag
\neq \emptyset,$ then it follows that $\lag \subset \WFh u.$
\end{remark}

\section{Proof of quasimode nonconcentration}\label{section:lagrangian}
We begin by recalling in more detail the notion of Lagrangian
regularity.  Throughout this paper, we will let $\Psi_h(X)$ denote the
algebra of semiclassical pseudodifferential operators on the manifold $X$ (acting on
half-densities) obtained locally by quantization of Kohn-Nirenberg
symbols, as described for instance in \S9 of \cite{Evans-Zworski}.

We say that $u$ is
Lagrangian with respect to $h^s L^2$ at $\rho \in \lag$ if there
exists a neighborhood $U$ of $\rho$ in $T^*X$ such that for all $k \in \NN$ and
all $A_1,\dots,A_k \in \Psi_h(X)$ with $\sigma_h(A_j)=0$ on $\lag,$
and $\WF'(A_J) \subset U,$ we
have
$$
h^{-k} A_1\cdots A_k u \in h^s L^2.
$$
In other words, $u$ enjoys ``iterated regularity''
under the application of operators of the form $h^{-1} A$ with $A$
characteristic on $\lag$ and microsupported near $\rho.$

We now restate Theorem~\ref{theorem:0} slightly more precisely: In
\cite{Wunsch:Integrable} (see also \cite{twomicro,twomicroerratum}) it
was shown that, subject to the above hypotheses, if $u$ is in $L^2$
and is Lagrangian\footnote{It suffices to assume Lagrangian regularity
  at this point with respect to $h^{-t}L^2$ for \emph{any} $t\in \RR,$
  as by interpolation with $u \in L^2$ we automatically obtain
  Lagrangian regularity with respect to $h^{-\delta} L^2$ for all
  $\delta>0.$} in an annular region (i.e., a hollow tube) surrounding
a single closed bicharacteristic on a rational Lagrangian torus, then
for every $\ep>0$ it is Lagrangian with respect to $h^{-\ep}L^2$ on
$\gamma$ as well.  Likewise, in the special case $n=2,$ the same
argument proved that if $u$ is in $L^2$ and Lagrangian \emph{at a
  single point} on $\lag,$ then $u$ is Lagrangian with respect to
$h^{-\ep}L^2$ \emph{globally on $\lag.$}

We now prove the first part of Theorem~\ref{theorem:1}.
Either all of $\lag$ lies in $\WFh u$ or there exists $\rho \in \lag$
such that $\rho \notin \WFh u.$
In the latter case, then \emph{a fortiori}
$u$ is Lagrangian at $\rho.$  Thus, by the results of \cite{twomicro},
$u$ is Lagrangian on all of $\lag$ with respect to $h^{-\ep}L^2$ for
all $\ep>0.$  Now by the semiclassical analog of the H\"ormander-Melrose theory of Lagrangian
distributions (see Alexandrova \cite{Alexandrova:Semiclassical}) this
means that microlocally near $\lag,$ $u$ can be written as an oscillatory
integral
\begin{equation}\label{localform}
u  = h^{-\ep+N/2} \int_{\RR^N} a(x,\xi;h) e^{i \phi(x,\xi)/h} \, d\xi
\end{equation}
for some $N \in \NN$ determined by the geometry of $\lag$ (in
particular by the local form of its projection to the base) and
with an amplitude $a \in \CcI,$ uniformly in $h.$  Here $\phi$
parametrizes $\lag$ in the sense that
\begin{equation}\label{parametrization}
\lag= \{(x,d_x\phi): (x,\xi) \in C\}\equiv \Phi(C).
\end{equation}
where
\begin{equation}\label{C}
C =\{(x,\xi): d_\xi \phi=0\}.
\end{equation}
By stationary phase, for $x,\xi \in C,$
$$
\Phi(x,\xi) \in \WFh u \Longleftrightarrow a(\bullet;h) \neq
O(h^\infty) \text{ in a neighborhood of } (x,\xi).
$$
By smoothness of $a,$ the set of such points is the closure of an open
set.  (Recall that the
semiclassical wavefront set cannot be empty, as that would contradict
the $L^2$ normalization.)  This concludes the proof of the first part
of the theorem.

The proof of the second part is analogous.  By the hypotheses, we have
Lagrangian regularity everywhere but along $\gamma.$
Theorem~\ref{theorem:0} then allows us to conclude global Lagrangian
regularity, which, by the argument above, shows that $\WFh u$ has
nonempty interior, and in particular, cannot have been a subset of
$\gamma$ after all.\qed

\section{Lagrangian quasimodes}

In this section, we prove Theorem~\ref{theorem:2}.

We recall that the principal symbol $\sigma_h(u)$ of a semiclassical Lagrangian
distribution $u \in h^{-s} L^2$ given locally by
\begin{equation}
u  = h^{-s+N/2} \int_{\RR^N} a(x,\xi;h) e^{i \phi(x,\xi)/h} \, d\xi
\end{equation}
can be \emph{locally} identified with the half-density $a,$ restricted
to the manifold $C$ given by \eqref{C} (which is, in turn,
diffeomorphically identified with $\lag$ via $\Phi$ defined in
\eqref{parametrization}), modulo $O(h).$ By standard results in the
calculus of Lagrangians, we know that $\sigma_h(u)$ is necessarily
invariant under $\lie_{\hamvf_p}+i p'$ with $p'$ denoting (the
operation of multiplication by) a subprincipal symbol of $P.$ (Indeed,
this same invariance property holds globally, where we interpret the
symbol as a section of the tensor product of $\Omega^{1/2}$ with a
flat complex line bundle.)

\subsection{The model case}
We begin our finer analysis of $\sigma_h(u)$ by considering the model
case $X=\torus_x^n =\RR_x^n/\ZZ^n,$ with the action-angle variables given by
$$
I_j=\xi_j,\quad \theta_j=x_j
$$
and the Lagrangian torus given by the
zero section:
$$
\lag_0=\{\xi=0\} \subset T^*(\torus_x^n).
$$
We will use $\abs{dx_1\cdots dx_n}^{1/2}$ to trivialize the
half-density bundle; note that this has vanishing Lie derivative along
constant coefficient vector fields in $x,$ hence we will identify
symbols with functions and the action of $\lie_{\hamvf_p}$ with that
of $\hamvf_p.$

Since the principal symbol of $P$ is assumed to be a function only of $\xi,$ vanishing on $\lag_0,$ its Taylor
expansion in the $\xi$ variables reads
$$
p = \sum \ombar_j\xi_j + \ombar_{ij}\xi_i\xi_j+ O(\abs{\xi}^3)
$$
and hence, since the subprincipal symbol of $P$ is a assumed to be a
real constant\footnote{If the subprincipal symbol is only constant on
  $\lag,$ there is an extra term in this expression of the form
  $h^2\sum \kappa_i(x) D_i.$}  $c,$
$$
P = h\sum \ombar_j D_j+hc + h^2 \sum \ombar_{ij}D_iD_j + \sum h^3 D_i
D_j D_k Q_{ijk}+h^2R
$$
where $Q_{ijk}, R\in \Psi_h(X).$ The hypothesis of quasi-convexity of $p$ near $\lag_0$
means that the matrix $\ombar_{ij}$ is positive definite on the
orthocomplement of the span of the vector $\sum \ombar_j \pa_j.$  Note
that the term $\sum
\ombar_j D_j$ is $i^{-1} \hamvf_p$ in this setting, and we
will use this latter notation as well.

We may further Taylor expand the principal symbol of $R$ into
$$
r(x) +\sum \xi_i r_i(x,\xi),
$$
hence we may express
$$
h^2 R = h^2 r(x) + h^3 \sum R_i D_i+ h^3 E
$$
with $E, R_i \in \Psi_h(X)$ and $r(x)$ denoting the multiplication operator
by the function of the same name.  Plugging this into our expression for $P$ we now
obtain
\begin{multline}
P = h\sum \ombar_j D_j+hc + h^2 \sum \ombar_{ij}D_iD_j+h^2 r(x)\\ +
h^3 \sum D_i D_j D_kQ_{ijk}+h^3\sum D_i R_i+ h^3\widetilde{R}
\end{multline}
with $\widetilde{R} \in \Psi_h(X).$  We will collectively write
the last three terms as $O(h^3)$ below.

We remark\footnote{Indeed, this remark will justify our $O(h^3)$ notation
  just introduced.} that in this model situation, the iterated regularity
definition of Lagrangian distributions and the definition by
oscillatory integrals actually coincide: a parametrization of $\lag_0$
is by the phase function $\phi=0,$ and $u$ is Lagrangian with respect
to $h^{-\ep} L^2$ if and only if
$$
u=u(x;h) \in \CI(\torus^n),\text{ with } \norm{\pa_x^\alpha u} \lesssim
h^{-\ep} \text{ for all } \alpha,
$$
i.e.\ $u$ is $h^{-\ep}$ times a function that is smooth in $X,$ uniformly in
$h.$  In this
situation, the distribution $u$ and its total symbol coincide.  The
principal symbol of $u$ as a Lagrangian distribution with respect to
$h^{-\ep} L^2$ is simply the equivalence class.
$$
\sigma_h(u) =  u \bmod O(h^{1-\ep}).
$$

We now prove a unique continuation theorem for
the principal symbol of a Lagrangian quasimode of $P$ in the model
setting.  This will constitute the main step in the proof of
Theorem~\ref{theorem:2}, with the remainder of the proof being a
conjugation to this model problem.

\begin{proposition}\label{proposition:normformcase}
Let $\lag_0$ be the zero section of $T^*(\torus^n)$ and
  $P$ as described above.
  If $Pu=O(h^{2+\delta})$ with $\delta>0$ and $u$ is Lagrangian with respect to $\lag_0$
  and $L^2$ normalized, then there do not exist $\ocal \subset \lag_0$
  open and $h_j \downarrow 0$ with $\sigma_h(u)(x;h_j) \to 0$
  pointwise a.e.\ on  $\ocal.$
\end{proposition}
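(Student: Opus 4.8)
The plan is to extract from $Pu=O(h^{2+\delta})$ a \emph{second} transport equation for the principal symbol, one that is \emph{elliptic} in the directions transverse to the Hamilton flow, and then to invoke a classical unique continuation theorem; the quasi-convexity hypothesis \ref{hyp6} will enter precisely to supply this ellipticity. Since $u$ is Lagrangian with respect to $\lag_0$, it equals (for every $\ep>0$) $h^{-\ep}$ times a function bounded in all $C^k(\torus^n)$ uniformly in $h$, so any operator in $\Psih(X)$ applied to $u$ preserves this; in particular the $O(h^3)$ remainder in the expansion of $P$, applied to $u$, is negligible. I would suppress the constant $c$ (it is removed by conjugating $u$ with a phase $\tau$ satisfying $\hamvf_p\tau\equiv1$, equivalently by replacing the flow-average used below by the corresponding twisted average; this alters the transverse operator $E$ below only by lower-order terms and a unimodular factor, irrelevant to unique continuation), so that $P=-ih\hamvf_p+h^2(Q+r)+O(h^3)$ with $Q=\sum\ombar_{ij}D_iD_j$. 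From $Pu=O(h^{2+\delta})$, dividing by $h$ yields the refined transport equation $\hamvf_p u=-ih(Q+r)u+o(h)$, and in particular $\hamvf_p u=o(h)$ in every Sobolev norm: $u$ is nearly invariant under the translation flow $\varphi_t$ of $\hamvf_p$.

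Now let $\bar u$ be the average of $u$ along $\varphi_t$, i.e.\ the orthogonal projection of $u$ onto the $\hamvf_p$-invariant functions; then $\bar u$ is constant on the orbit closures of $\varphi_t$, which are the fibers of a torus bundle $\pi\colon\torus^n\to B$. The first key point is that $u-\bar u\to0$ in $C^\infty(\torus^n)$ as $h\to0$: indeed $\hamvf_p(u-\bar u)=\hamvf_p u=o(h)$ in every Sobolev norm and $u-\bar u$ has vanishing flow-average, so one bounds the Fourier modes $|k|\le K$ of $u-\bar u$ using that $\hamvf_p$ is bounded below off its kernel on that finite set of modes, and the modes $|k|>K$ using the uniform smoothness of $u$, then lets $h\to0$ and $K\to\infty$. (It is here that the uniform-in-$h$ smoothness of a Lagrangian quasimode sidesteps small-divisor difficulties.) Substituting $u=\bar u+(u-\bar u)$ into $\hamvf_p u=-ih(Q+r)u+o(h)$, using $\hamvf_p\bar u=0$ and absorbing $-ih(Q+r)(u-\bar u)$ into $o(h)$, gives $\hamvf_p(u-\bar u)=-ih(Q+r)\bar u+o(h)$. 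Dividing by $h$ and averaging along $\varphi_t$: the left side averages to $0$; since $Q$ is constant-coefficient it commutes with $\hamvf_p$, so $Q\bar u$ is again invariant and $\langle Q\bar u\rangle=Q\bar u$, while $\langle r\bar u\rangle=\langle r\rangle\bar u$. Hence $E\bar u=o(1)$ in $C^\infty(B)$, where $E:=Q+\langle r\rangle$, acting on invariant functions, descends to a second-order operator on $B$ whose principal symbol is the restriction of the quadratic form $\xi\mapsto\sum\ombar_{ij}\xi_i\xi_j$ to the conormal directions of the fibers. Those directions lie in $\{\sum\ombar_i\xi_i=0\}$, on which the form is positive definite by quasi-convexity, so $E$ is elliptic with smooth real coefficients.

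Suppose now, toward a contradiction, that there exist $\ocal\subset\lag_0$ open and $h_j\downarrow0$ with $\sigma_h(u)(\cdot;h_j)\to0$ a.e.\ on $\ocal$. In the model case $\sigma_h(u)=u\bmod O(h^{1-\ep})$, so $u(\cdot;h_j)\to0$ a.e.\ on $\ocal$; since $\hamvf_p u=o(h)$ forces $u(\varphi_tx;h)=u(x;h)+o(h)$ for bounded $t$, this a.e.\ vanishing propagates to the open, flow-invariant saturation $\ocal'=\bigcup_t\varphi_t(\ocal)$, and then, by $u-\bar u\to0$, one also gets $\bar u(\cdot;h_j)\to0$ a.e.\ on $\ocal'$. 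Regard $\bar u_j:=\bar u(\cdot;h_j)$ as functions on $B$. Then $\|\bar u_j\|_{L^2(B)}$ tends to a positive constant, since $\|\bar u_j\|_{L^2(\torus^n)}^2=\|u_j\|_{L^2}^2-\|u_j-\bar u_j\|_{L^2}^2\to1$, while $E\bar u_j\to0$ in $L^2(B)$. The elliptic estimate bounds $\{\bar u_j\}$ in $H^2(B)$, so a subsequence converges in $L^2(B)$ to some $\bar u_\infty$ with $E\bar u_\infty=0$ and $\|\bar u_\infty\|_{L^2(B)}>0$; along a further subsequence $\bar u_\infty=0$ a.e.\ --- hence, by elliptic regularity and continuity, identically --- on the nonempty open set $\pi(\ocal')\subset B$. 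The classical unique continuation theorem for second-order elliptic operators with smooth real coefficients then forces $\bar u_\infty\equiv0$ on the connected torus $B$, contradicting $\|\bar u_\infty\|_{L^2(B)}>0$. (When $\varphi_t$ is minimal, $B$ is a point and the argument collapses to the observation that $u$ is then uniformly close to a constant yet vanishes a.e.\ on $\ocal$ and has $L^2$-norm $1$.)

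The step I expect to be the main obstacle is the middle one: producing a \emph{useful} higher transport equation for a quasimode that is not assumed classical --- there is no expansion $u\sim u_0+hu_1+\cdots$ to exploit. The device of averaging the $O(h^2)$-part of $Pu$ over the Hamilton flow --- so that $\hamvf_p$ kills the averaged symbol and the equation collapses onto the transverse base $B$ --- is what makes this possible, and it is also the unique place where quasi-convexity, rather than merely isoenergetic nondegeneracy \ref{hyp4}, is required: it is precisely what renders the averaged transverse operator $E$ \emph{elliptic}, so that unique continuation from an open set can be applied at all. (With only isoenergetic nondegeneracy, $E$ could be of, say, wave type, for which no such unique continuation holds --- which is why Theorem~\ref{theorem:2} needs the stronger hypothesis while Theorem~\ref{theorem:1} does not.) The Fourier-analytic estimate behind $u-\bar u\to0$ and the reduction of the constant $c$ to $0$ are, by comparison, routine.
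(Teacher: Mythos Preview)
Your argument is correct and follows the same core strategy as the paper: extract from the $O(h^2)$ level of $Pu$ an elliptic equation in the directions transverse to the orbit closure, then apply unique continuation. The differences are in packaging rather than substance. Where the paper chooses coordinates $(y,z)$ splitting $\torus^n\cong T\times T'$ and performs an explicit Fourier decomposition in $y$, isolating the single dominant mode $e_{\alpha_0}(y)\hat u_{\alpha_0}(z)$ and reading off $(Q_{\alpha_0}+\hat r_0)\hat u_{\alpha_0}=O(h^\delta)$ directly, you accomplish the same projection coordinate-free via the flow-average $\bar u$; your operator $E=Q+\langle r\rangle$ on $B\cong T'$ is precisely the paper's $L=Q_{\alpha_0}+\hat r_0$ once $c$ is normalized away. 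Likewise, where the paper uses a partial inverse $G$ and the finite-dimensionality of $\ker L$ to write $\hat u_{\alpha_0}=h^\delta G(g)+v$ with $Lv=0$, you instead invoke the elliptic estimate and Rellich compactness to pass to a limit $\bar u_\infty\in\ker E$. Each route lands on the same unique-continuation contradiction.

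One small inaccuracy: a global $\tau$ on $\torus^n$ with $\hamvf_p\tau\equiv 1$ does not exist (it would be unbounded along any orbit), so the conjugation you describe cannot be done that way. What does work---and what your parenthetical ``twisted average'' correctly points to---is conjugation by the globally defined character $e^{2\pi i\alpha_0\cdot x}$, where $\alpha_0\in\ZZ^n$ is the unique lattice vector with $2\pi\ombar\cdot\alpha_0+c=0$ (its existence being forced by the nontriviality of the quasimode, exactly as in the paper). After that conjugation your $\bar u$ coincides with the paper's $\hat u_{\alpha_0}$, and as you note the principal part of $E$ is unchanged. Your small-divisor remark is also on point: the Fourier truncation argument showing $u-\bar u\to 0$ is the device that lets one avoid any Diophantine hypothesis, and the paper's version of this is the observation that all modes $\alpha\neq\alpha_0$ are $O(h^{1-\ep})$.
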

\begin{proof}
As discussed above, we have
\begin{equation}\label{modelequation}
\begin{aligned}
P &= h\sum \ombar_j D_j+hc + h^2 \sum \ombar_{ij}D_iD_j+h^2 r(x)
+O(h^3)\\
&= h\big(-i\hamvf_p+c) + h^2 \sum \ombar_{ij}D_iD_j+h^2 r(x) +O(h^3)
\end{aligned}
\end{equation}
Certainly then the principal symbol, $\sigma_h(u)=u \bmod O(h^{1-\ep})$, is
annihilated by
$$
-i \hamvf_p +c,
$$
hence if it is nonvanishing at a point $\rho \in \lag_0,$ it is also
nonvanishing along the whole orbit $\gamma_\rho$ of $\rho$ under the
flow along $\hamvf_p.$ We let $T$ denote the closure of the orbit
through one such $\rho$; $T$ is necessarily a sub-torus of dimension $k
\geq 1.$  Identifying $\torus^n$ with $\RR^n/\ZZ^n$ we may lift $T$ to
$\RR^n$ and translate coordinates to obtain a vector subspace $V
\subset \RR^n$ of dimension $k.$  Letting $L=V \cap \ZZ^n$ we obtain
a sub-lattice of $\ZZ^n$, with $\ZZ^n/L$ torsion-free.  Hence by
taking a basis of the quotient module we may
extend a basis $\be_1,\dots,\be_k$ of $L$ to a basis
$\be_1,\dots,\be_k,\bff_1,\dots,\bff_{n-k}$ of $\ZZ^n.$  Now identifying
$$
\torus^n = \RR^n/(\ZZ \be_1+ \dots + \ZZ \be_k+ \ZZ
\bff_1+ \dots+ \ZZ\bff_{n-k}),
$$
we may split
$$
\torus^n \cong T \times T'
$$
with
$$
T = (\RR \be_1+ \dots+ \RR \be_k)/(\ZZ \be_1+ \dots+ \ZZ \be_k),
$$
$$
T'=(\RR \bff_1+ \dots+ \RR \bff_{n-k})/(\ZZ \bff_1+ \dots+ \ZZ \bff_{n-k}).
$$
We may thus introduce new coordinates
$x=(y_1,\dots,y_k,z_1,\dots,z_{n-k})$ on $\torus^n$ with $T$ defined
by $z=0,$ $y \in \RR^k/\ZZ^k$ and $T'$ defined by $z=0$ with $y \in
\RR^{n-k}/\ZZ^{n-k}.$
On the torus
$T$ (and its translates $\{z=z_0\},$ for $z_0 \in T'$), the Hamilton flow has dense orbits,
hence the space of solutions $v$ of functions on $T$ satisfying
\begin{equation}\label{cohom}
(-i \hamvf_p +c) v=0
\end{equation}
has dimension at most $1,$ since specifying $v$ on a single point then determines
$v$ on a dense set in $T.$  Equivalently, Fourier analyzing in
$y\in T$, we may write
$$
v(y) = \sum_{\alpha \in \ZZ^k} \hat v(\alpha) e_\alpha(y)
$$
with $e_\alpha(y) =e^{2 \pi i \alpha \cdot y}.$
Since $\hamvf_p=\sum \ombar_i \pa_{x_i}$ is tangent to $T,$ and since its orbit
closure is dense in $T,$ we may write it in our new coordinates as
$$
\sum \ombar_i \pa_{x_i}= \sum \omtilde_j \pa_{y_j}
$$
for some $\omtilde_i,$ $i=1,\dots, k,$ \emph{with no rational relation
  holding among the $\omtilde_i.$}  Then
$$
(-i \hamvf_p +c) v(y)=0 \Longleftrightarrow \sum_{\alpha \in \ZZ^k} (\omtilde \cdot \alpha+c)\hat v(\alpha) e_\alpha(y)=0.
$$
By the irrationality assumption, there can be at most one value of $\alpha$ such
that $\omtilde \cdot \alpha+c=0,$ hence $v$ can only have at most this
one Fourier mode.  

If the dimension of the solution space of \eqref{cohom} is $0,$ then since the
principal symbol must satisfy \eqref{cohom}, there cannot be an
$L^2$-normalized quasimode.  Thus, we assume that the dimension of
smooth solutions to \eqref{cohom} is $1,$ hence that there is exactly one
frequency vector $\alpha_0$ satisfying $\omtilde \cdot \alpha_0+c=0.$

Now we decompose both sides of \eqref{modelequation} in Fourier modes
along $T,$ i.e.\ in the $y$ variables.  We let
$$
u = \sum e_\alpha(y) \hat u_\alpha(z;h)
$$
denote the Fourier series of $u.$  We also note that the second
order constant coefficient operator
$$
Q=\sum \ombar_{ij} D_{x_i} D_{x_j}
$$
becomes, under the change to $y,z$ variables, a new operator of the
same form, which is \emph{elliptic in the $z$ variables} as a
consequence of the quasi-convexity hypothesis (which tells us that
$\sum \ombar_{ij} D_i D_j$ is elliptic on the orthocomplement of
$\hamvf_p$).  We may rewrite this operator as
$$
Q=\sum \rho^1_{ij} D_{y_i} D_{y_j} + \sum \rho^2_{ij} D_{y_i} D_{z_j}+ \sum_{i,j=1}^{n-k} \Omega_{ij} D_{z_i} D_{z_j}
$$
with the matrix $\Omega$ positive definite.
Thus, it acts on
$$
u=\sum e_\alpha(y) \hat u_\alpha(z;h),
$$
as
\begin{equation}\label{ellipticpde}
\begin{aligned}
Qu &= \sum_{\alpha \in \ZZ^k} e_\alpha(y) \bigg[ \sum_{i,j=1}^{n-k} \Omega_{ij} D_{z_i}
D_{z_j} +\sum \gamma_i(\alpha) D_{z_i}+\rho(\alpha)\bigg] (\hat u_\alpha)
\\ &\equiv\sum_\alpha e_\alpha(y) Q_\alpha (\hat u_\alpha)
\end{aligned}
\end{equation}
with $\gamma_i,\rho$ depending linearly resp.\ quadratically on $\alpha.$
Each operator $Q_\alpha$ is an elliptic constant coefficient operator
in $z.$  We also decompose
$$
r(x) = r(y,z) = \sum e_\alpha(y) \hat r_\alpha(z).
$$

Thus, our Fourier analysis of \eqref{modelequation} (for an
$O(h^{2+\delta})$ quasimode) finally yields
\begin{equation}\label{foobar}
 \sum e_\alpha(y) \big( h(\omtilde \cdot \alpha +c) \hat u_\alpha 
+ h^2Q_\alpha \hat u_\alpha \big) + h^2 \sum_{\alpha,\beta}
e_{\alpha+\beta}(y) \hat r_\beta \hat u_\alpha+ O(h^{3-\ep})=O(h^{2+\delta}).
\end{equation}
(where we have used the fact that overall, $u= O(h^{-\ep})$).
Examining this equation modulo $O(h^{2-\ep})$ reveals that all terms
$\hat u_\alpha$ with $\alpha \neq \alpha_0$ are $O(h^{1-\ep}).$  Thus,
we focus attention on the coefficient of  $e_{\alpha_0}.$  The term
$\omtilde \cdot \alpha +c$ in
\eqref{foobar} then vanishes, and the terms in the discrete
convolution are $O(h^{3-\ep})$ except when $\beta=0,$
$\alpha=\alpha_0;$ thus, we obtain, by examining the coefficient of
$e_{\alpha_0}$ in \eqref{foobar} modulo $O(h^{2+\delta}),$
$$
h^2 (Q_{\alpha_0} + \hat r_0)\hat u_{\alpha_0}= O(h^{2+\delta}).
$$
Hence for some $g(z;h),$ uniformly bounded as $h \downarrow 0,$
$$
(Q_{\alpha_0} + \hat r_0)\hat u_{\alpha_0}(z;h)= h^{\delta} g(z;h).
$$
Let $L$ denote the elliptic operator\footnote{In the case of the
  weaker assumption on the subprincipal symbol, this operator would
  have first order terms in it as well; the second order part would be
  unchanged, however.} $$L=Q_{\alpha_0}+\hat r_0$$ on
$T' \cong \RR^{n-k}/\ZZ^{n-k}.$  Since it is elliptic on a compact manifold, $L$ has finite-dimensional nullspace
on $L^2(T'),$ with a partial inverse $G$ satisfying
$$
LG =\pi_{\Ran(L)}.
$$
hence for each $h,$
$$
L(h^\delta G(g)) = h^\delta g,
$$
and we conclude that
$$
\hat u_{\alpha_0}(z;h)= \big(h^{\delta} G(g)(z;h) + v(z;h)\big),
$$
with 
$$
L v(z;h) = 0 \text{ on } T'
$$
for each $h.$  Note that there exists $h_0$ such that for $h<h_0,$
$\norm{v}_{L^2(T')}>1/2,$ since $u$ was $L^2$-normalized, and Fourier
modes other than the $\alpha_0$ mode have decaying mass.

Now suppose that there exists $\ocal' \subset T'$ with $\hat
u_{\alpha_0}(z;h) \to 0$ pointwise a.e.\ for $z \in \ocal'$ as $h=h_j\downarrow 0.$
Then we must have
\begin{equation}\label{convtozero}
v(z;h_j) \to 0
\end{equation}
for all $z \in \ocal'.$ As the nullspace of $L$ is finite-dimensional,
and its elements enjoy the property of unique continuation (see, e.g.,
Theorem~17.2.6 of \cite{Hormander:vol3}), there exists $c>0$ such
that \begin{equation}\label{uniquecontinuation} Lf(z)=0,\
  \norm{f}_{L^2(T')}\geq 1/2 \Longrightarrow\int_{\ocal'}\abs{f}^2\,
  dz \geq c.\end{equation} By Dominated Convergence (which we may
apply since elements of the nullspace of $L$ are uniformly bounded above), this is a
contradiction with \eqref{convtozero}.  Thus, such a set $\ocal'$
cannot exist.

Now since all Fourier coefficients with $\alpha \neq \alpha_0$ vanish as $O(h^{1-\ep})$, we may
take $u_{\alpha_0}(z;h) e_{\alpha_0}(y)$ to be a representative of
$\sigma_h(u);$ by the above considerations, we may take
$$
\sigma_h(u) = v(z;h) e_{\alpha_0}(y)+O(h^\delta).
$$
Thus, there cannot exist an open set  in $\lag_0$ on which
$\sigma_h(u)\to 0$ pointwise a.e.\ along
any sequence $h_j \downarrow 0,$ as this would entail the existence of
$\ocal'\subset T'$ on which $v \to 0.$
\end{proof}

\subsection{The general case (proof of Theorem~\ref{theorem:2})}

Finally, we turn to the general case of the theorem, in which $X$ is
arbitrary and $\lag$ is any Liouville torus in $T^*X.$ The proof is by
conjugating to the normal form studied in
Proposition~\ref{proposition:normformcase}, with the difficulty that there
do exist obstructions to the global existence of semiclassical Fourier
integral operators quantizing a given symplectomorphism (microlocally,
there is no obstruction).  In
particular, the Bohr-Sommerfeld-Maslov quantization conditions are known to
obstruct this process.  Fortunately, in the situation at hand,
we know a priori that there exists a nontrivial Lagrangian
distribution supported along one of our orbit closures, and
this ensures that the cohomological obstruction vanishes on the
homology classes represented in the orbit closure.

As $u$ is normalized, there is some point at which it has nonvanishing
principal symbol, hence by invariance of the symbol under
$-i\lie_{\hamvf_p} + c,$ there is some orbit closure $T$ along which
the principal symbol of $u$ as a Lagrangian distribution with respect
to $h^{-\ep} L^2$ is nonvanishing.  This principal symbol takes values
in $L \otimes \Omega^{1/2}\otimes \E$ where $L$ is the Maslov bundle
over $\lag,$ $\Omega^{1/2}$ is the half-density bundle, and $\E$ is
the \emph{pre-quantum} line bundle (see \cite{Bates-Weinstein} \S4.1,
4.4, where this object is denoted $\iota^*\E_{M,\hbar}$).  As observed
by Maslov \cite{Maslov} and further amplified, for instance, in
\cite{Duistermaat:Oscillatory}, \cite{Bates-Weinstein}, the existence
of a nonvanishing section of this bundle has a topological
implication: if it exists globally on $\lag,$ we obtain a
periodicity:\footnote{We note that there exists a distinction, at this
  point, between the case when $h \downarrow 0$ is taken to be a
  continuous parameter versus a discrete family; in the former case,
  \eqref{maslov} then entails that $\lambda$ and $(1/4)\alpha$ must
  separately vanish, while in the latter there is the possibility that
  for a given sequence of $h\downarrow 0,$ \eqref{maslov} is satisfied
  with a nontrivial left-hand side.}
\begin{equation}\label{maslov}
\frac{\lambda}{2\pi h} \equiv \frac 14 \alpha \bmod H^1(\lag; \ZZ).
\end{equation}
Here $\lambda$ denotes the cohomology class of the canonical one-form
$\xi \, dx$ restricted to $\lag$ (the ``Liouville class''), and $\alpha$ denotes the Maslov class in
$H^1(\lag; \ZZ)$  (cf.\ equation (1.5.3) of
\cite{Duistermaat:Oscillatory}).  By contrast, in the case at hand,
the section only exists locally along $T$ (and so along its translates
as well), hence
\eqref{maslov} holds but only \emph{when paired with cycles in $H_1(T)
  \subset H_1(\lag),$} i.e., if $\iota$ denotes the inclusion $T \to \lag,$
\begin{equation}\label{maslov2}
\frac{\iota^*\lambda}{2\pi h} \equiv \frac 14 \iota^* \alpha \bmod H^1(T; \ZZ).
\end{equation}

By our (local) integrability hypothesis there exists a (locally
defined) symplectomorphism
$\kappa$ from $T^*X$ to $T^*(\torus^n)$ mapping $\lag$ to $\lag_0,$ the zero section of
$T^*(\torus^n):$ we simply set
$$
x\circ \kappa=\theta,\quad \xi\circ \kappa=I.
$$
In general, we may or may not be able to
quantize such a symplectomorphism to a unitary FIO.  In the case at
hand, however, it turns out that we may do so microlocally along
$T.$  To prove this, we proceed as follows.  Let $$\Lambda =
\grph(\kappa)' \subset T^*(X \times \torus^n).$$ (This is a Lagrangian
manifold; the prime denotes inversion of dual variables in the second
factor.)  
Let $\tlag$ denote the restriction of the twisted graph to $\lag:$
$$
\tlag = \{(\rho, \kappa(\rho)'): \rho \in \lag\} \subset \Lambda;
$$
let $\tT$ denote the further restriction to the torus $T:$
$$
\tT = \{(\rho, \kappa(\rho)'): \rho \in T\} \subset \tlag.
$$
and let $\cU$ be a tubular neighborhood of $\tT$ in $\Lambda.$

Now let $\tlambda$ and $\talpha$ denote the
Liouville class and Maslov class of $\Lambda.$  By Example~5.26 of
\cite{Bates-Weinstein}, if $s_\lag$ denotes the embedding $\lag \hookrightarrow
\tlag\subset \Lambda,$ then $s_\lag$ induces an isomorphism between
$H^*(\lag)$ and $H^*(\Lambda),$ and this isomorphism preserves Maslov classes:
$$
\alpha = s_\lag^*(\talpha).
$$
Also, if $\iota_{\tlag}$ denotes the inclusion of $\tlag$ into
$\Lambda$ and $\pi$ the projection from $T^*X \times T^*(\torus^n)$ to
the left factor, since the right projection of $\tlag$ has range in
the zero section, we have
$$
\iota_{\tlag}^*(\tlambda) = \iota_{\tlag}^* \circ \pi^* (\xi\cdot dx) =
\pi_\lag^*(\lambda)= (\hat{s}_{\lag}^*)^{-1}\lambda
$$
where $\pi_\lag$ is the projection from $\tlag \subset \Lambda$ to
$\lag$ and $\hat{s}_\lag$ is the diffeomorphism from $\lag$ to $\tlag$
(hence $\pi_\lag \circ \hat{s}_\lag=\Id$).  As a result, we obtain
$$
\lambda=s_\lag^*(\tlambda).
$$
\begin{figure}[t]\caption{The maps among $\lag, \tlag, \Lambda.$}
\begin{displaymath}
\xymatrix{
 & \tlag\ar[d]^{\iota_{\tlag}} \ar[dl]^{\!\!\pi_{\lag}} \\ \lag  \ar@/^/[ur]^{\hat{s}_{\lag}}  \ar[r]_{s_{\lag}} & \Lambda}
\end{displaymath}
\end{figure}
Thus, since $\pi_\lag$ and $s_\lag$ induce isomorphisms on cohomology,
\eqref{maslov2} shows that for any cycle $\gamma$ on $\tlag$ lying in
$\tT,$
\begin{equation}\label{maslov3}
\big(\frac{\tlambda}{2\pi h} - \frac 14
\talpha,\gamma)=\big(\frac{s_\lag^*(\tlambda)}{2\pi h} - \frac 14
s_\lag^*(\talpha),(\pi_\lag)_*(\gamma)\big) = \big(\frac{\lambda}{2\pi h} - \frac 14
\alpha,(\pi_\lag)_*(\gamma)\big) \in \ZZ.
\end{equation}
Since any cycle $\gamma
\in H_1(\Lambda;\ZZ)$ lying in $\cU$ is
homologous to a cycle in $\tT\subset \tlag$
we in fact obtain
\begin{equation}\label{maslov4}
\frac{\iota_{\cU}^*\tlambda}{2\pi h} \equiv \frac 14 \iota_{\cU}^* \talpha \bmod H^1(\cU; \ZZ),
\end{equation}
with $\iota_{\cU}$ denoting the inclusion of $\cU$ into $\Lambda.$

As a consequence of the quantization condition \eqref{maslov4}, there
exists a Maslov canonical operator microlocally defined over a
neighborhood of $\tT$ i.e., we can find a Lagrangian distribution
$U_0$ on $X\times \torus^n,$ microsupported in a neighborhood of $\tT$
and Lagrangian with respect to $\Lambda,$ with principal symbol of norm
$1$ on a sub-neighborhood of $\tT.$ Viewing $U_0$ as the Schwartz
kernel of an operator $\mathcal{D'}(X) \to \mathcal{D'}(\torus^n),$ 
there exists $R \in \Psi_h(X)$ such that
$$
U_0^*U_0  = \Id +h R
$$
when \emph{acting on the space of distributions microsupported near $T.$}
Consequently, on such distributions, the h-FIO
$$
U = U_0( U_0^*U_0 )^{-1/2}
$$
acts unitarily (i.e., is a partial isometry in a microlocal sense).

In fact, it turns out that we can refine the above argument to demand
a little more of $U$ than mere microlocal unitarity.  In \cite[Theorem
2.4]{Hitrik-Sjostrand1}, Hitrik-Sj\"ostrand show that we may construct
$U$ so as to obey an improved Egorov theorem: if $a$ is a
semiclassical symbol on $T^*X$ and $\Opw$ denotes the Weyl
quantization, then we may achieve
\begin{equation}\label{betteregorov}
U \Opw(a) U^* = \Opw(a\circ \kappa^{-1}+O(h^2)),
\end{equation}
i.e.\ the Egorov theorem holds to one order better than is usual.  In
general, such a $U$ cannot be taken to be single valued---it is,
rather, Floquet periodic; however as described above, the
obstruction to its global single-valued construction is the condition
\eqref{maslov4}, hence we may in fact find a single-valued $U$
microlocally unitary along a neighborhood of $T$ such that
\eqref{betteregorov} holds for $a$ with essential support in a
neighborhood of $T.$

We may of course employ the same construction over a neighborhood of any translate of
$T$ inside $\lag.$ However, the resulting microlocally defined
operators may not fit together to be globally defined $U$ over $\lag:$
in general, the construction of \cite{Hitrik-Sjostrand1} now yields
$U$ that is Floquet-periodic with respect to cycles in $T'$ under the
splitting $\lag\cong T \times T'$ as described in the model setting.
This is the conjugating operator that we shall employ.

Now we finally turn to the proof of the theorem.  With $U$ as
constructed above, we obtain
$$
(U P U^*) U u=O(h^{2+\delta})
$$
microlocally near $\kappa(T);$ moreover, $UPU^*$ satisfies the
hypotheses of Proposition~\ref{proposition:normformcase} in this
neighborhood, with the condition on the subprincipal symbol being
guaranteed by the improved Egorov property \eqref{betteregorov}, and
with the slight variation of taking values in a flat bundle over
$\lag_0$ with trivial holonomy along cycles lying in
$T.$\footnote{This makes virtually no difference in the proof of
  Proposition~\ref{proposition:normformcase}: the only change is that
  the elliptic operator to which we apply the unique continuation
  theorem acts on sections of a flat ($h$-dependent) complex line
  bundle.  This does not affect our application of
  Theorem~17.2.6 of \cite{Hormander:vol3}, which is a merely local
  statement.  The constant $c$ in our quantatitive statement of unique
  continuation \eqref{uniquecontinuation} can be taken uniform with
  respect to the compact set of possible cycles defining such a
  bundle, hence the same argument applies as in the scalar case.}  Thus, the principal
symbol of $U(u)$ must satisfy the unique continuation property.\qed

\bibliography{all}
\bibliographystyle{amsplain}
\end{document}